\newtheorem{definition}{Definition}[section]
\newtheorem{theorem}{Theorem}[section]
\newtheorem{proposition}{Proposition}[section]
\newtheorem{remark}{Remark} [section]
\begin{document}
\title{An inverse problem in optimal transport on closed Riemannian manifolds}

  \author[J. Zhai]{Jian Zhai}
\address{J. Zhai: School of Mathematical Sciences, Fudan University, 220 Handan Road, Shanghai 200433, China; Center for Applied Mathematics, Fudan University, 220 Handan Road, Shanghai 200433, China; 
  (\tt{jianzhai@fudan.edu.cn}).}
\thanks{J. Zhai is supported by National Key Research and Development Programs of China (No. 2023YFA1009103), NSFC(No. 12471396), Science and Technology Commission of Shanghai Municipality (23JC1400501)}

 \author[S. Zhang]{Kelvin Shuangjian Zhang}
\address{K. S. Zhang: School of Mathematical Sciences, Fudan University, 220 Handan Road, Shanghai 200433, China; Center for Applied Mathematics, Fudan University, 220 Handan Road, Shanghai 200433, China; 
  (\tt{ksjzhang@fudan.edu.cn}).}

\begin{abstract}
We consider the problem of recovering the Riemannian metric on a compact closed manifold from the optimal transport maps when the underlying cost function is the squared Riemann distance. We show that the metric can be uniquely determined up to a multiplicative constant.
\end{abstract}
\maketitle
\section{Introduction}

The optimal transport problem is to find the transport plans between two probability measures that minimize the overall transport cost for a given cost function $c$. The inverse optimal transport problem we consider in this paper aims to recover the underlying cost $c$ that results in the optimal transport plans. Inverse OT problems have drawn a lot attention recently, see for example~\cite{stuart2020inverse,paty2020regularized,ma2020learning,gonzalez2024nonlinear,bao2025wellposednessefficientalgorithmsinverse, wang2023self}. 

Optimal transport was first studied by Monge \cite{monge1781memoire} in 1781, while Kantorovich \cite{kantorovich1942translocation} in 1942 proposed a relaxed version of the Monge problem and provided its dual formulation. The modern theory of optimal transport was initiated by Brenier \cite{brenier1987decomposition,brenier1991polar} in 1991, where he showed the existence, uniqueness, and characterization of optimal transport maps when the cost function is the squared Euclidean distance. 
The study of the optimal transport problem on Riemannian manifolds can be traced back to the work of McCann \cite{mccann2001polar}, where he proved the existence and uniqueness of optimal transport maps when the cost function is the squared Riemannian distance function. We refer to \cite{villani2008optimal,santambrogio2015optimal} for more details on optimal transport.

Vast applications of optimal transportation can be found in other fields of mathematics, economics, finance, and biology. In biology, researchers seek to understand the ancestors and descendants of cells during the cellular differentiation stage of embryonic development. One way to achieve this goal is, knowing the empirical distributions of cells from experiments at, say, two given time points, to find the optimal transportation plan that gives the joint distribution of these marginals under the quadratic cost~\cite{schiebinger2019optimal}. In the literature, researchers assume that cells embedded in a high-dimensional Euclidean space have a natural Euclidean quadratic cost by default. 

However, if the cost function is different, the optimal transport plan may also change. Thus, we are curious about the inverse problem of recovering the transportation cost function defined on the product space, given any two marginals and knowing their optimal transportation plans, i.e., the joint distributions, under this cost. 

In this paper, we study an inverse optimal transport problem on a closed Riemannian manifold, and the cost function is the squared Riemannian distance function.
Assume $(M,g)$ is a compact connected Riemannian manifold without boundary. Denote $d(x,y)$ be the distance between the two points $x$ and $y$, i.e., the length of the shortest path connecting $x$ and $y$.  Denote $\mathrm{d}V_g$ to be the volume form of $(M,g)$, where in local coordinates $\mathrm{d}V_g(x)=\sqrt{\det(g_{ij}(x))}\mathrm{d}x^1\wedge\cdots\wedge\mathrm{d}x^n$. We define the exponential map $\exp_x:T^*_xM\rightarrow M$ as
\[
\exp_x(v)=\gamma_{x,v}(1)
\]
where $\gamma_{x,v}$ is a geodesic such that $\gamma_{x,v}(0)=x$, $\dot{\gamma}_{x,v}(0)=v^\sharp$. Here $v^\sharp=g^{-1}v\in T_xM$.

Denote the cost function by $c(x,y)=\frac{1}{2}d^2(x,y)$. 
Let $P_{ac}(M)$ denote the space of probability measures on $M$ that are absolutely continuous with respect to $\mathrm{d}V_g$. Given $\mu,\nu\in P_{ac}(M)$,
consider the Monge minimization problem
\begin{equation}
\inf_{\Phi_\#\mu=\nu}\int _M c(x,\Phi(x))\mathrm{d}\mu(x).
\end{equation}
Here the infimum is taken over all measurable maps $\Phi:M\rightarrow M$ satisfying $\Phi_\#\mu=\nu$, where $(\Phi_\#\mu)[A]=\mu[\Phi^{-1}(A)]$ for any measurable set $A$. For the solution of the above Monge problem, McCann \cite{mccann2001polar} proved:
\begin{proposition}\label{OT_existence-and-uniqueness}
Let $\mu(x),\nu(x)\in P_{ac}(M)$.
Then there exists a unique solution $\Phi^\star$ (uniqueness up to $\mu$-a.e.) such that
\[
\int _M c(x,\Phi^\star(x))\mathrm{d}\mu(x)=\inf_{\Phi_\#\mu=\nu}\int _M c(x,\Phi(x))\mathrm{d}\mu(x).
\]
Furthermore, $\Phi^\star$ is given by $\Phi^\star(x)=\exp_x(\nabla\psi(x))$ for some $c$-convex function $\psi:M\rightarrow \mathbb{R}$.
\end{proposition}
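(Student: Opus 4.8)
The plan is to take the classical route through Kantorovich's relaxation and duality, and then to exploit the special structure of the cost $c=\tfrac12 d^2$ on the compact manifold $M$. First I would pass from the Monge problem to the Kantorovich problem
\[
\min_{\pi\in\Pi(\mu,\nu)}\int_{M\times M} c(x,y)\,\mathrm{d}\pi(x,y),
\]
where $\Pi(\mu,\nu)$ is the set of couplings with marginals $\mu$ and $\nu$. Since $M$ is compact, $c$ is continuous and bounded, $\Pi(\mu,\nu)$ is nonempty and weakly compact by Prokhorov's theorem, and $\pi\mapsto\int c\,\mathrm{d}\pi$ is weakly continuous, so an optimal plan $\pi^\star$ exists and any optimal plan is concentrated on a $c$-cyclically monotone set. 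By Kantorovich duality (valid because $c$ is continuous on the compact $M\times M$), there is a $c$-convex function $\psi:M\to\mathbb{R}$, with $c$-transform $\psi^c$, such that every optimal $\pi^\star$ is supported in
\[
\Gamma=\bigl\{(x,y)\in M\times M:\ \psi(x)+\psi^c(y)=c(x,y)\bigr\};
\]
equivalently, for $\pi^\star$-a.e.\ $(x,y)$ the point $x$ minimizes $z\mapsto c(z,y)-\psi(z)$, i.e.\ $y\in\partial^c\psi(x)$.

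The crucial structural input is that on $(M,g)$ the function $z\mapsto\tfrac12 d^2(z,y)$ is uniformly semiconcave and is smooth away from the cut locus $\mathrm{Cut}(y)$. Consequently a $c$-convex $\psi$ is locally Lipschitz and differentiable off a set of $\mathrm{d}V_g$-measure zero; since $\mu\in P_{ac}(M)$, $\psi$ is differentiable $\mu$-a.e. Fix $x$ where $\psi$ is differentiable and $(x,y)\in\Gamma$. Then $z\mapsto c(z,y)-\psi(z)$ has an interior minimum at $x$ (there is no boundary), and since $c(\cdot,y)$ is superdifferentiable and $\psi$ is differentiable at $x$, the first-order condition gives $\nabla\psi(x)\in\partial^+_z c(z,y)|_{z=x}$. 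One then argues, and this is the heart of the matter, that this membership forces $x\notin\mathrm{Cut}(y)$, so that $c(\cdot,y)$ is actually smooth at $x$ and its (super)differential is the single covector $-\exp_x^{-1}(y)$; matching the sign conventions of the statement then yields $y=\exp_x(\nabla\psi(x))$. In particular $\partial^c\psi(x)$ reduces to the single point $\Phi^\star(x):=\exp_x(\nabla\psi(x))$ whenever $\psi$ is differentiable at $x$.

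Because $\psi$ is differentiable $\mu$-a.e., the previous step shows that every optimal $\pi^\star$ is concentrated on the graph of $\Phi^\star$, hence $\pi^\star=(\mathrm{id}\times\Phi^\star)_\#\mu$ and $\Phi^\star_\#\mu=\nu$; thus $\Phi^\star$ solves the Monge problem and the Monge and Kantorovich minima coincide. For uniqueness, if $\Phi_1,\Phi_2$ both solved the Monge problem, then $(\mathrm{id}\times\Phi_i)_\#\mu$ would both be optimal plans, hence so would their average, contradicting the fact just proved that an optimal plan is concentrated on the graph of a single $\mu$-a.e.\ well-defined map; so $\Phi_1=\Phi_2$ $\mu$-a.e. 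The main obstacle is the regularity step: establishing the uniform semiconcavity of the squared Riemannian distance on the compact $M$, deducing a.e.\ differentiability of $c$-convex potentials from it, and—most delicately—showing that at a differentiability point of $\psi$ the minimizing geodesic joining $x$ to the optimal target avoids the cut locus, so that the $c$-superdifferential collapses to the single point $\exp_x(\nabla\psi(x))$. This is precisely the technical core of McCann's argument.
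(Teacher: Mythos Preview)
The paper does not supply its own proof of this proposition; it simply attributes the result to McCann \cite{mccann2001polar} and uses it as a black box. Your outline is precisely the strategy of McCann's original argument (as streamlined in later expositions such as Villani's book): pass to the Kantorovich relaxation, use compactness and duality to obtain a $c$-convex potential $\psi$, exploit semiconcavity of $\tfrac12 d^2$ to get Lipschitz regularity and a.e.\ differentiability of $\psi$, and then show that at a differentiability point the $c$-superdifferential collapses to the singleton $\{\exp_x(\nabla\psi(x))\}$, so every optimal plan is a graph. One small caveat: the step you phrase as ``this membership forces $x\notin\mathrm{Cut}(y)$'' is not quite how McCann argues---what is actually shown is that any element of the superdifferential of $c(\cdot,y)$ at $x$ has the form $-v$ with $\exp_x v=y$ and $|v|=d(x,y)$, so the first-order condition already pins down $y=\exp_x(\nabla\psi(x))$ without first excluding the cut locus; the cut-locus avoidance is then a consequence rather than an input. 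But you correctly flag this as the technical heart, and the overall route is right.
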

\begin{remark}
    In McCann's work \cite{mccann2001polar}, one actually can only assume $\mu$ is absolutely continuous with respect to $\mathrm{d}V_g$ and $\nu$ needs merely Borel.
\end{remark}

The $c$-convexity is defined as follows
\begin{definition}[$c$-convexity]
A function $\psi:M\rightarrow\mathbb{R}\cup\{+\infty\}$ is $c$-convex if for any $x$
\[
\psi(x)=\sup_{y\in M}[\psi^c(y)-c(x,y)],
\]
where
\[
\psi^c(y)=\inf_{x\in M}[\psi(x)+c(x,y)].
\]
\end{definition}

We consider the following inverse problem: assuming that, under the squared geodesic distance cost function, the optimal transport map between any two probability measures $\mu,\nu \in P_{ac}(M)$ is known, we aim to recover the Riemannian metric $g$. The problem of recovering a general cost function from optimal transport data has been analyzed in \cite{gonzalez2024nonlinear}.\\

For any two measures $\mu,\nu \in P_{ac}(M)$, we may write $\mathrm{d}\mu(x)=f(x)\mathrm{d}V_g(x)$ and  $\mathrm{d}\nu(y)=h(y)\mathrm{d}V_g(y)$ where $f, h \ge 0$ and 
$\int_Mf\mathrm{d}V_g=\int_Mh\mathrm{d}V_g=1$. We also assume throughout the paper that $h>0$ on $M$.
 Denote by $\Phi$ the optimal transport map described in Proposition \ref{OT_existence-and-uniqueness}. 
 It was shown in \cite{cordero2001riemannian} that the solution $\Phi$ is differentiable $\mu$-almost everywhere and satisfies the Jacobian equation
\begin{equation}\label{Jacobian}
|\det(\mathrm{d}_x\Phi)|=\frac{f(x)}{h(\Phi(x))}.
\end{equation}
We refer to \cite[Chapter 11]{villani2008optimal} for more details and here the Jacobi determinant $\det(\mathrm{d}_x\Phi)$ is associated with $T$ as
\begin{equation}\label{algebraicjacobi}
|\det(\mathrm{d}_x\Phi)|=\lim_{r\rightarrow 0}\frac{\mathrm{vol}_g(\Phi(B_r(x))}{\mathrm{vol}_g(B_r(x))}.
\end{equation}
We can also define $\det(\mathrm{d}_x\Phi)$ in an algebraic way \cite[page 364]{villani2008optimal}.
For this, denote $\gamma$ a geodesic from $x$ to $ \Phi(x)$ such that $\gamma(0)=x$. Choose an orthonormal basis of $T_xM$, denoted by $\{e_1,e_2,\cdots, e_n\}$ where $e_1=\frac{\dot{\gamma}(0)}{|\dot{\gamma}(0)|}$. We then parallel transport $e_j$ from $x=\gamma(0)$ to any point $\gamma(t)$ on the geodesic to obtain $e_j(t)$, $j=1,2,\cdots, n$. Then we get an orthonormal basis $\{e_1(t),e_2(t),\cdots, e_n(t)\}$ of $T_{\gamma(t)}M$ for any $t$. Consider $\Phi$ as a map $\Phi:M\rightarrow M$, and then $\mathrm{d}_x\Phi$ is a map
\[
 \mathrm{d}_x\Phi:T_xM\rightarrow T_{\Phi(x)}M.
\]
Under the above orthonormal bases for $T_xM$ and $T_{\Phi(x)}M$, one can write $\mathrm{d}_x\Phi$ as a matrix. Then $\det(\mathrm{d}_x\Phi)$ is simply defined as the determinant of the matrix. One can show that this algebraic definition and the geometric definition \eqref{algebraicjacobi} are equivalent. See also \cite[Claim 4.5]{cordero2001riemannian}.\\

Since
\[
\Phi(x)=\exp_x(\nabla\psi(x)),
\]
we have $\nabla\psi(x)=\exp_x^{-1}(\Phi(x))$.
Then
\begin{equation*}
\nabla_xd(x,\Phi(x))=\dot{\gamma}(0)=\frac{\nabla\psi(x)}{|\nabla\psi(x)|},\quad d(x,\Phi(x))=|\nabla\psi(x)|.
\end{equation*}
Here $\nabla_x d(x,y)$ means the gradient of $d(x,y)$ as a function of the first variable (similar for $\nabla_x c(x,y)$, see below).
So we end up with the equation
\begin{equation}\label{eqpsi}
\nabla\psi(x)+\nabla_xc(x,\Phi(x))=0.
\end{equation}
Differentiating the above equation with respect to $x$, we obtain
\[
\nabla^2\psi(x)+\nabla^2_xc(x,\Phi(x))+\nabla_y\nabla_xc(x,\Phi(x))\mathrm{d}_x\Phi(x)=0.
\]
Here $\nabla^2_xc(x,y)$ means $\nabla^2 c(\cdot,y)$ and $\nabla_y\nabla_xc(x,y)$ means first differentiating $c(x,y)$ in $x$ with $y$ fixed and then differentiating in $y$ with $x$ fixed.
Using the Jacobian equation \eqref{Jacobian}, we have the Monge-Amp\`ere type equation
\begin{equation}\label{MAeq}
\det\left[\nabla^2\psi(x)+\nabla^2_xc(x,\Phi(x))\right]=|\det(\nabla_y\nabla_xc(x,\Phi(x)))|\frac{f(x)}{h(\Phi(x))}.
\end{equation}
First, we observe that, whatever metric $g$ is, as long as the densities $f$ and $h$ are the same, any constant function $\psi\equiv C$ is a {solution} to \eqref{MAeq} given that $c = \frac{1}{2}d^2$. Indeed, if $\psi\equiv C$, we have $\nabla^2\psi=0$ and $\Phi(x)=x$.  Using the facts
\begin{equation}\label{distancetometric}
\nabla^2_xc(x,y)\vert_{y=x}=g(x),\quad \nabla^2_{x,y}c(x,y)\vert_{y=x} =-g(x),
\end{equation}
one can immediately see that $\psi\equiv C$ is the solution to the Monge-Amp\`ere type equation. Note that under orthonomal coordinates at $x$, $\nabla^2_xc(x,y)\vert_{y=x}$ and $\nabla^2_{x,y}c(x,y)\vert_{y=x}$ are simply $I_n$ and $-I_n.$\\

Now the inverse problem under investigation can be formulated based on the above Monge-Amp\`ere type equation: to recover $g$ assume knowing $T$ solving \eqref{MAeq} for any $f\mathrm{d}V_g$ and $h\mathrm{d}V_g$ such that $\int_Mf\mathrm{d}V_g=\int_Mh\mathrm{d}V_g=1$ and $h> 0$. We mention that an inverse source problem for a Monge-Amp\`ere equation has been recently studied in \cite{LiLin2025} when we were preparing for the manuscript.

\begin{remark}
We note here that we can only specify $\mu$ and $\nu$, not really $f$ and $h$, for the metric $g$ is not known a priori. 
\end{remark}

\begin{remark}
The optimal map should remain the same if one multiply $g$ by a positive constant $c_0$.
\end{remark}

The main result of our paper is:
\begin{theorem}\label{mainthm}
Assume for any $\mu(x)=f\mathrm{d}V_g,\nu(x)=h\mathrm{d}V_g\in P_{ac}(M)$ with $f, h\in C^\infty(M)$ and $h>0$, the solution $\Phi(x)$ to the equation \eqref{MAeq} is known.
Then the metric $g$ can be uniquely determined up to a multiplicative constant.
\end{theorem}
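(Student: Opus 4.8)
The plan is to extract local differential information about $g$ from the optimal transport data by probing with measures concentrated near a point. Fix $x_0 \in M$ and work in geodesic normal coordinates centered at $x_0$. The key idea is that near the diagonal, the cost $c(x,y) = \tfrac12 d^2(x,y)$ and all its derivatives are determined by the jet of $g$ at $x_0$; conversely, knowing the optimal map for a rich enough family of $(\mu,\nu)$ should let us read off these derivatives. First I would consider small perturbations of the identity map: take $\mu = f\,\mathrm{d}V_g$ and $\nu = h\,\mathrm{d}V_g$ with $f$ and $h$ close to the same density, so that by the implicit function theorem (applied to \eqref{MAeq} linearized at the constant solution $\psi \equiv C$, using \eqref{distancetometric}) the potential $\psi$ is small and $\Phi$ is a small perturbation of the identity. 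Linearizing \eqref{eqpsi}–\eqref{MAeq} about $\psi \equiv C$, the leading-order equation becomes a Poisson-type equation $\mathrm{tr}_g(\nabla^2\psi) = \Delta_g \psi = f - h$ (after normalizing densities), wherein $\Delta_g$ is the Laplace–Beltrami operator. Since we are told $\Phi$ — hence $\nabla\psi = \exp_x^{-1}\Phi(x)$, hence $\psi$ up to a constant — for every such pair, we recover the operator $\Delta_g$ acting on all of $C^\infty(M)$.

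The second step is to show that knowing $\Delta_g$ (as an operator on functions, together with the volume measure $\mathrm{d}V_g$, which is also encoded because $\mu,\nu$ are prescribed as measures and the relation between $f$ and the measure involves $\mathrm{d}V_g$) determines $g$ up to a constant. This is essentially classical: the principal symbol of $\Delta_g$ is $|\xi|_{g^{-1}}^2$, which gives $g^{-1}$ pointwise, hence $g$ — with no constant ambiguity at all at this stage. However, one must be careful: what the linearization actually delivers may be $\Delta_g$ only up to an overall positive scaling, or the normalization of densities ($\int f\,\mathrm{d}V_g = 1$) may couple the unknown volume to the unknown metric in a way that leaves exactly a one-parameter family. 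I would track constants carefully through the linearization: replacing $g$ by $c_0 g$ scales $d^2$ by $c_0$, scales $\mathrm{d}V_g$ by $c_0^{n/2}$, and scales $\exp_x^{-1}$ correspondingly, so the *map* $\Phi$ is genuinely invariant, which is consistent with — and forces — the claimed one-constant ambiguity. The argument should be organized so that the scaling $g \mapsto c_0 g$ is manifestly the only freedom.

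A cleaner alternative for the second step, avoiding any symbol calculus, is to use the cost directly: from the family of optimal maps one can recover, for points $x,y$ near each other, the function $c(x,y)$ itself (e.g., via $c(x,y) = \tfrac12|\nabla\psi(x)|_g^2$ once $\psi$ and $g$ are partially known, bootstrapping), and then $d(x,y) = \sqrt{2c(x,y)}$ recovers the Riemannian distance locally, which recovers $g$ up to the constant coming from the global scaling. Either route needs the identity $\Phi(x) = \exp_x(\nabla\psi(x))$ and the boundary-jet relations \eqref{distancetometric} as the bridge between the analytic object $\psi$ and the geometric object $g$.

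I expect the main obstacle to be making the linearization rigorous and quantitative: one must show that for densities $f,h$ in a small $C^\infty$-neighborhood of a constant, equation \eqref{MAeq} is uniquely solvable with $\psi$ depending smoothly on $(f,h)$ (so that differentiating in the perturbation parameter is legitimate), and that the resulting linear map $f - h \mapsto \psi$ recovered from the data genuinely determines the full operator $\Delta_g$ on $C^\infty(M)$ rather than merely on some subspace. The uniform ellipticity of \eqref{MAeq} near the constant solution — guaranteed by $\nabla_y\nabla_x c(x,x) = -g(x)$ being nondegenerate from \eqref{distancetometric} — together with Schauder theory on the closed manifold $M$, should furnish the needed solvability and smooth dependence, but the bookkeeping that isolates exactly the multiplicative-constant ambiguity (and no more) is the delicate point.
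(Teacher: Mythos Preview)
Your linearization instinct is right and matches the paper's first step, but the argument has a genuine circularity at the point where you write ``we are told $\Phi$ --- hence $\nabla\psi = \exp_x^{-1}\Phi(x)$, hence $\psi$ up to a constant.'' The inverse exponential map $\exp_x^{-1}$ is built from the unknown metric $g$, so you cannot apply it to the data. Concretely, at the linearized level $\Phi(x)=x+\varepsilon\,g^{ij}\partial_j\varphi\,\partial_i+o(\varepsilon)$, so what the map $\Phi$ actually hands you is the \emph{vector field} $(\nabla\varphi)^\sharp=g^{-1}d\varphi$, not the one--form $d\varphi$ and not the scalar $\varphi$. Lowering that index, or integrating to recover $\varphi$, requires $g$. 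Your alternative route via $c(x,y)=\tfrac12|\nabla\psi(x)|_g^2$ has the same defect: $|\nabla\psi|_g^2=g_{ij}V^iV^j$ with $V=(\nabla\psi)^\sharp$ known but $g_{ij}$ unknown. A tell--tale sign that something is off is that your scheme, if it worked, would recover $\Delta_g$ and hence its principal symbol $g^{ij}\xi_i\xi_j$ exactly, with \emph{no} multiplicative ambiguity --- contradicting the scaling invariance $g\mapsto c_0g$ that you correctly identify.

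The paper's proof confronts exactly this point. After the same linearization (obtaining $\Delta_g\varphi+\langle\nabla\log h,\nabla\varphi\rangle_g=f_1/h$ and a local well--posedness proposition to justify it), it records that the recoverable datum is the vector field $H=g^{-1}\nabla\varphi$, not $\varphi$. It then treats the resulting problem as an internal--measurement inverse problem in the style of hybrid imaging (Bal and collaborators): choose finitely many solutions whose gradients form a moving frame, use the known vector fields $H_i=g^{-1}\nabla u_i$ to build algebraic constraints that pin down the unimodular part $\tilde g=(\det g)^{1/n}g^{-1}$ pointwise, and finally solve a first--order gradient equation for $\beta=(\det g)^{-1/n}$. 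That last step determines $\log\beta$ only up to an additive constant, which is precisely the multiplicative constant in the theorem. So the missing ingredient in your proposal is not the linearization but the nontrivial step of recovering $g$ from vector--field (rather than scalar or covector) internal data.
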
 

\section{Linearization}
The Monge-Amp\`ere type equation \eqref{MAeq} is a fully nonlinear elliptic equation, we first linearize it at $f\equiv h$, for which constant function $\psi\equiv C$ is a solution.\\

We first examine $\nabla^2_xc(x,y)$ and $\nabla_y\nabla_xc(x,y)$. We will follow the lines in \cite[Chapter 14, Third Appendix]{villani2008optimal} closely.
Assume $\gamma$ is a geodesic from $x$ to $y$ such that $\gamma(0)=x,\gamma(1)=y$. Denote $v=\exp_x^{-1}(y)$, that is $y=\exp_xv$. Denote
\[
F_t(x,v)=\exp_x(tv).
\]
Using the relation
\begin{equation}\label{Ftexp}
F_1(x,\exp_x^{-1}(y))=y,
\end{equation}
we have
\[
(\partial_xF_1+\partial_vF_1\nabla_x\exp_x^{-1}y)=0
\]
by taking derivatives with respect to $x$ of \eqref{Ftexp}.

Note that $J_0^1(t)=\partial_xF_t$ and $J_1^0(t)= \partial_vF_t$, as variations of geodesics, are Jacobi matrices. More precisely, $J_0^1(t)$ and $J_1^0(t)$ satisfy the Jacobi equations
\begin{equation}\label{JacobiEq}
\ddot{J}_\sigma^{1-\sigma}(t)+R(t)J_\sigma^{1-\sigma}(t)=0,\quad \sigma=0,1,
\end{equation}
 such that
\[
J_0^1(0)=I_n,\quad \dot{J}_0^1(0)=0,\quad J_1^0(0)=0,\quad \dot{J}_1^0(0)=I_n,
\]
under the parallel-transported orthonormal basis $\{e_1(t),e_2(t),\cdots, e_n(t)\}$.
Here $R$ is a symmetric matrix where
\[
R_i^j(t)=\langle R(\dot{\gamma}(t),e_i(t))\cdot\dot{\gamma}(t),e_j(t)\rangle.
\]
where $R$ is the Riemann curvature tensor.
Then
\[
J_0^1(1)-J_1^0(1)H=0,
\]
where
\[
H=\nabla^2_xc(x,y).
\]
Taking derivatives in $y$ of \eqref{Ftexp}, we obtain
\[
\partial_vF_1\nabla_y\exp_x^{-1}(y)=I_n,
\]
which can be written as
\[
-J_1^0(1)\nabla_y\nabla_x c(x,y)=I_n
\]
using the relation
\begin{equation*}
\nabla_x c(x,y)+\exp_x^{-1}(y)=0.
\end{equation*}
~\\

Although $f$ and $h$ are not fully known, the ratio $\frac{f}{h}=\frac{\mathrm{d}\mu}{\mathrm{d}\nu}$ is determined by $\mu$ and $\nu$ without knowing $\mathrm{d}V_g$. Now take $\mu=\nu+\varepsilon\mu_1$ with $\mathrm{d}\mu_1= f_1\mathrm{d}V_g$ and $\int_M f_1\mathrm{d}V_g=0$. Then $\frac{f_1}{h}=\frac{\mathrm{d}\mu-\mathrm{d}\nu}{\mathrm{d}\nu}$. Recall that we have chosen $h$ to be strictly positive. So for a fixed $f_1$, when $\varepsilon$ is sufficiently small $f$ is also strictly positive. By the discussion after \cite[Theorem 10.28]{villani2008optimal}, the function $\psi$ is then unique up to an additive constant almost everywhere. 

Let us first do a formal linearization.
Assume that the solution to the Monge-Amp\`ere type equation \eqref{MAeq} is of the form
\[
\psi=\varepsilon\varphi+o(\varepsilon), 
\]
and then 
\[
\Phi(x)=\exp_x(\varepsilon\nabla\varphi)+o(\varepsilon).
\]
We can also formally write
\[
\varphi=\frac{\partial\psi}{\partial\varepsilon}\Big\vert_{\varepsilon=0}.
\]
Now
\[
\begin{split}
\frac{f(x)}{h(\Phi(x))}&=\frac{h(x)+\varepsilon f_1(x)}{h(x)+\varepsilon\langle\nabla h(x),\nabla\varphi(x)\rangle_g+o(\varepsilon)}\\
&=(h(x)+\varepsilon f_1(x))\frac{1}{h(x)}\left((1-\varepsilon\frac{1}{h(x)}\langle\nabla h(x),\nabla\varphi(x)\rangle_g+o(\varepsilon)\right)\\
&=1+\varepsilon\frac{f_1(x)}{h(x)}-\varepsilon\frac{\langle\nabla h(x),\nabla\varphi(x)\rangle_g}{h(x)}+o(\varepsilon).
\end{split}
\]


Take $y=\Phi(x)$ in the above discussion. Consider the asymptotics
\[
R_i^j(t)=\mathcal{O}(\varepsilon^2),
\]
since $|\dot{\gamma}(t)|=|\dot{\gamma}(0)|=|\nabla\psi(x)|=\mathcal{O}(\varepsilon)$,
and
\[
J_0^1(t)=I_n+\varepsilon  L_0^1(t)+o(\varepsilon).
\]
Substituting into the equation \eqref{JacobiEq}, we have
\[
\ddot{L}_0^1(1)(t)=0,
\]
which is to say
\[
J_0^1(1)=I_n+o(\varepsilon).
\]
Similarly, one can show that
\[
J_1^0(1)=I_n+o(\varepsilon).
\]
Then we can conclude that
\[
\nabla^2_xc(x,\Phi(x))=[J_1^0(1)]^{-1}J^1_0(1)=I_n+o(\varepsilon),
\]
\[
\nabla_x\nabla_yc(x,\Phi(x))=-[J_1^0(1)]^{-1}=-I_n+o(\varepsilon)
\]
Consequently
\[
\det\left[\nabla^2\psi(x)+\nabla^2_xc(x,\Phi(x))\right]=1+\varepsilon\mathrm{trace}(\nabla^2\varphi(x))+o(\varepsilon)=1+\varepsilon\Delta_g\varphi+o(\varepsilon).
\]

Substituting above asymptotic expansions into \eqref{MAeq}, we obtain
\[
1+\varepsilon\Delta_g\varphi+o(\varepsilon)=1+\varepsilon\frac{f_1(x)}{h(x)}-\varepsilon\frac{\langle\nabla h(x),\nabla\varphi(x)\rangle_g}{h(x)}+o(\varepsilon).
\]
Then we obtain the (linear) equation for $\varphi=\frac{\partial\psi}{\partial\varepsilon}\vert_{\varepsilon=0}$,
\begin{equation}\label{linearEq}
\Delta_g\varphi+\langle\nabla\log h,\nabla\varphi\rangle_g=\frac{f_1(x)}{h(x)}.
\end{equation}
Keep in mind that $h$ is not known, only $h\mathrm{d}V_g$ is known, but $\frac{f_1}{h}$ is known. Observe that
\[
h\Delta_g\varphi+\langle\nabla h,\nabla\varphi\rangle_g=f_1\quad\quad\text{in }M.
\]
Integrating the above equation on $M$, and using integration by parts, we have
\[
\int_Mh\Delta_g\varphi+\langle\nabla h,\nabla\varphi\rangle_g\mathrm{d}V_g=\int_M-\langle\nabla h,\nabla\varphi\rangle_g+\langle\nabla h,\nabla\varphi\rangle_g\mathrm{d}V_g=0=\int_M f_1\mathrm{d}V_g,
\]
which is consistent with the condition $\int_Mf_1\mathrm{d}V_g=0$.

Consider the homogeneous equation
\begin{equation}\label{homogeneouseq}
\Delta_g\varphi+\langle\nabla\log h(x),\nabla\varphi\rangle_g=0\quad\text{in }M.
\end{equation}
Using integration by parts, we have
\[
\begin{split}
0=&\int_M (\Delta_g\varphi(x)+\langle\nabla\log h(x),\nabla\varphi(x)\rangle_g)h(x)\varphi(x)\mathrm{d}V_g\\
=&\int_M -\langle\nabla\varphi,\nabla(h\varphi)\rangle_g+\langle\nabla h,\nabla\varphi\rangle_g \varphi\mathrm{d}V_g\\
=&-\int_M h|\nabla\varphi|_g^2\mathrm{d}V_g.
\end{split}
\]
Therefore $\varphi$ is a constant. This means that the solution to the equation \eqref{homogeneouseq} must be a constant. Equivalently, the null space of the operator $P=\Delta_g+\langle\nabla\log h(x),\nabla\rangle_g$ is one-dimensional, consisting of constant functions. We remark here that $P$ is self-adjoint as an operator on the space $L^2(M,\mathrm{d}\nu(x))$, that is, $L^2(M,h(x)\mathrm{d}V_g)$. 

Notice that
\[
\Phi(x)=\exp_x(\varepsilon\nabla\varphi+o(\varepsilon))=x+\varepsilon(\nabla\varphi)^\sharp+o(\varepsilon).
\]
Recall that we have used the convention that $\nabla\varphi=\partial_j\varphi\mathrm{d}x^j$ is a covector, and $(\nabla\varphi)^\sharp$ is the corresponding vector w.r.t. $g$.
Therefore
\[
\frac{\partial \Phi}{\partial\varepsilon}\Big\vert_{\varepsilon=0}=g^{ij}\partial_j\varphi.
\]

To make the above formal linearization rigorous, we can use the following local well-posedness result.
\begin{proposition}
    Give $\alpha\in (0,1)$ and $k\geq 2$. Let $f,h\in C^{k,\alpha}(M)$, $h>0$, there exists a sufficiently small $\varepsilon$ such that if $\|f-h\|_{C^{k,\alpha}}<\varepsilon$, then $\psi\in C^{k+2,\alpha}(M)$.
\end{proposition}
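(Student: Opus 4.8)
The plan is to run the Banach--space implicit function theorem around the constant solution $\psi\equiv0$ and then bootstrap the resulting $C^{2,\alpha}$ solution up to $C^{k+2,\alpha}$ via Schauder estimates for \eqref{MAeq}. First I would recast \eqref{MAeq} as the zero set of a nonlinear operator. Writing $\Phi_\psi(x):=\exp_x(\nabla\psi(x))$ and using the identity $|\det(\mathrm{d}_x\Phi_\psi)|=\det[\nabla^2\psi+\nabla^2_xc(x,\Phi_\psi)]/|\det\nabla_y\nabla_xc(x,\Phi_\psi)|$ obtained while deriving \eqref{MAeq}, that equation is equivalent to $|\det(\mathrm{d}_x\Phi_\psi(x))|\,h(\Phi_\psi(x))=f(x)$, so I set
\[
\mathcal N(\psi,f):=\det\!\big[\nabla^2\psi+\nabla^2_xc(x,\Phi_\psi(x))\big]\,\frac{h(\Phi_\psi(x))}{\,|\det\nabla_y\nabla_xc(x,\Phi_\psi(x))|\,}-f(x),
\]
viewed near $(0,h)$ with $\psi$ in $X:=\{\psi\in C^{2,\alpha}(M):\int_M\psi\,\mathrm{d}V_g=0\}$ and $f$ near $h$ satisfying $\int_Mf\,\mathrm{d}V_g=1$. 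Since $c=\tfrac12 d^2$ is smooth near the diagonal of $M\times M$ and $\Phi_\psi$ stays $C^1$-close to $\mathrm{id}$ when $\psi$ is $C^2$-small, and since composition with the $C^{1,\alpha}$ map $\Phi_\psi$ together with products and quotients of Hölder functions preserve $C^{0,\alpha}$ regularity, $\mathcal N$ is a well-defined $C^1$ map into $C^{0,\alpha}(M)$, with $\mathcal N(0,h)=0$ (for $\psi\equiv0$ one has $\Phi_\psi=\mathrm{id}$, $\mathrm{d}_x\Phi_\psi=I_n$). Moreover, for $\psi$ near $0$ the map $\Phi_\psi$ is a diffeomorphism, and the change of variables $y=\Phi_\psi(x)$ gives $\int_M|\det(\mathrm{d}_x\Phi_\psi)|\,h(\Phi_\psi)\,\mathrm{d}V_g=\int_Mh\,\mathrm{d}V_g=1=\int_Mf\,\mathrm{d}V_g$, so in fact $\mathcal N$ takes values in $Z:=\{w\in C^{0,\alpha}(M):\int_Mw\,\mathrm{d}V_g=0\}$.

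Next, $D_\psi\mathcal N(0,h)$ is exactly the linearization computed above: from $J_0^1(1)=I_n+o(\varepsilon)$ and $J_1^0(1)=I_n+o(\varepsilon)$ one obtains $D_\psi\mathcal N(0,h)\varphi=h\Delta_g\varphi+\langle\nabla h,\nabla\varphi\rangle_g=\mathrm{div}_g(h\nabla\varphi)$. This divergence-form elliptic operator on the closed manifold $M$ is Fredholm of index zero from $X$ to $Z$, and the integration-by-parts identity in the excerpt (showing every solution of the homogeneous equation is constant) gives that its kernel on $X$ is trivial; hence it is an isomorphism $X\to Z$. The implicit function theorem then provides $\delta>0$ and a $C^1$ map $f\mapsto\psi(f)$, defined for $\|f-h\|_{C^{2,\alpha}}<\delta$, with $\psi(h)=0$ and $\mathcal N(\psi(f),f)=0$; differentiating this identity at $f=h$ in a direction $f_1$ gives $\mathrm{div}_g(h\nabla\dot\psi)=f_1$, i.e.\ \eqref{linearEq} for $\dot\psi=\partial_\varepsilon\psi|_{\varepsilon=0}$, which makes the formal linearization rigorous. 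Since $C^{k,\alpha}(M)\hookrightarrow C^{2,\alpha}(M)$ continuously for $k\ge2$, taking $\varepsilon$ small enough ensures $\|f-h\|_{C^{2,\alpha}}<\delta$, so we obtain a solution $\psi\in C^{2,\alpha}(M)$ of \eqref{MAeq}; for $\psi$ this small $\nabla^2\psi+\nabla^2_xc(x,\Phi_\psi)$ is positive definite, $\psi$ is $c$-convex, and $\Phi_\psi$ transports $f\,\mathrm{d}V_g$ to $h\,\mathrm{d}V_g$, so by the uniqueness in Proposition \ref{OT_existence-and-uniqueness} this $\psi$ is the optimal transport potential.

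Finally I would bootstrap. Write \eqref{MAeq} as $\det[\nabla^2\psi+A_\psi]=B_\psi$ with $A_\psi(x):=\nabla^2_xc(x,\Phi_\psi(x))$ and $B_\psi(x):=|\det\nabla_y\nabla_xc(x,\Phi_\psi(x))|\,f(x)/h(\Phi_\psi(x))$; the $C^{2,\alpha}$-smallness of $\psi$ keeps $\nabla^2\psi+A_\psi$ uniformly positive definite, so this is a uniformly elliptic Monge--Amp\`ere equation. If $\psi\in C^{m,\alpha}(M)$ for some $m\ge2$, then $\Phi_\psi\in C^{m-1,\alpha}$, hence $A_\psi\in C^{m-1,\alpha}$ and $B_\psi\in C^{\min(k,\,m-1),\alpha}$; differentiating the equation once, each first-order derivative of $\psi$ solves a linear elliptic equation with coefficients in $C^{m-2,\alpha}$ and right-hand side in $C^{\min(k,\,m-1)-1,\alpha}$, so global Schauder estimates on the closed manifold $M$ upgrade $\psi$ to $C^{\min(k+2,\,m+1),\alpha}$. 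Iterating from $m=2$ raises the regularity one order at a time until $\psi\in C^{k+2,\alpha}(M)$, at which point the gain saturates, proving the proposition.

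The step I expect to require the most care is verifying that $\mathcal N$ is a genuine $C^1$ map between the chosen Hölder spaces: the composition operators $\psi\mapsto h\circ\Phi_\psi$ and $\psi\mapsto A_\psi$, and the operator $\psi\mapsto\nabla^2\psi$, all consume derivatives, and one must check that the gap between the domain $C^{2,\alpha}$ and the target $C^{0,\alpha}$ leaves enough room --- which it does precisely because $c$ is smooth and is only ever composed with $\Phi_\psi\in C^{1,\alpha}$. Everything else --- the Fredholm/isomorphism property of $\mathrm{div}_g(h\nabla\cdot)$ and the Schauder iteration --- is routine once the functional-analytic framework is fixed. (Alternatively, the continuity method along $h_t=(1-t)h+tf$, $t\in[0,1]$, works as well: openness is the same linearized isomorphism, and closedness is immediate because for $\|f-h\|$ small all the solutions remain near the constant.)
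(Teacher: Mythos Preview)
Your proof is correct and follows the same core strategy as the paper: apply the implicit function theorem around the constant potential $\psi\equiv 0$, using that the linearization $\varphi\mapsto h\Delta_g\varphi+\langle\nabla h,\nabla\varphi\rangle_g=\mathrm{div}_g(h\nabla\varphi)$ is an isomorphism between appropriate mean-zero (or modulo-constants) spaces.

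The one substantive difference is where you place the regularity work. The paper applies the implicit function theorem directly between $X=C^{k+2,\alpha}(M)/\mathbb{R}$ and $Y=C^{k,\alpha}(M)$, so the $C^{k+2,\alpha}$ regularity of $\psi$ comes out immediately from the statement of the theorem, and Schauder theory is invoked only once to show the linearized operator is invertible at that level. You instead run the implicit function theorem at the base level $C^{2,\alpha}\to C^{0,\alpha}$ and then bootstrap the resulting solution up to $C^{k+2,\alpha}$ by differentiating \eqref{MAeq} and iterating Schauder estimates. Both routes are standard; the paper's is shorter, while yours trades the (mild) task of checking that the nonlinear map is $C^1$ between the higher H\"older spaces for the (also mild) task of carrying out the bootstrap. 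You are also more explicit than the paper about the compatibility condition $\int_M\mathcal N(\psi,f)\,\mathrm dV_g=0$, which is needed for the linearization to genuinely be an isomorphism onto the correct codomain.
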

\begin{proof}
    We follow the proof of \cite[Proposition 2.1]{LiLin2025}. Fix $h$. We set up the Banach spaces
    \[
    X:=C^{k+2,\alpha}(M)/+,\quad Y:=C^{k,\alpha}(M).
    \]
    Consider the map $F:X\rightarrow Y$,
    \[
     F(u)=\frac{h(\exp_x(\nabla u(x)))\det\left[\nabla^2u(x)+\nabla^2_xc(x,\exp_x(\nabla u(x)))\right]}{|\det(\nabla_y\nabla_xc(x,\exp_x(\nabla u(x)))|}
    \]
    Clearly $F(0)=h$ and $F$ is differentiable at near $0$. By above analysis, we have
    \[
    \partial_u F(0)v=h\Delta_gv+\langle\nabla h,\nabla v\rangle_g.
    \]
    For any $f_1\in C^{k,\alpha}(M)$, the equation
    \[
    \partial_u F(0)v=h\Delta_gv+\langle\nabla h,\nabla v\rangle_g=f_1
    \]
    has a unique solution $v\in C^{k+2,\alpha}(M)/+$ using the standard Schauder theory. We can then apply \cite[Theorem 10.4]{renardy2004introduction} to conclude that for any $f$ in a sufficiently small neighborhood $V$ of $F(0)=h$ (in $C^{k,\alpha}$-norm), there exists a solution $\psi$ to \eqref{MAeq}. The uniqueness of $\psi$ is already guaranteed by the nonlinear theory. In addition, \cite[Theorem 10.4]{renardy2004introduction} says that the map $F^{-1}:V\rightarrow X, F(f)=\psi$ is continuously differentiable.
\end{proof}

Therefore we can have $g^{ij}\partial_j\varphi$ from $\Phi$.
Now our problem reduces to the inverse problem for the linear equation \eqref{linearEq}: fix $\nu=h\mathrm{d}V_g$, assume for any given $\frac{f_1}{h}$ such that $\int_M \frac{f_1}{h}\mathrm{d}\nu=0$, we can measure $g^{ij}\partial_j\varphi(x)$, $x\in M$, where $\varphi$ is a solution to \eqref{linearEq}, we want to recover $g$.\\

\section{Proof of the main result}
Recovering the coefficients of elliptic equations from internal measurements also arise from hybrid medical imaging. We refer to \cite{bal2013reconstruction,monard2013inverse,bal2014linearized,bal2014inverse} for interested readers. In particular, the problem considered in \cite{bal2014inverse} is quite similar to ours for the linearized equation \eqref{linearEq}.\\

First, we consider the local recovery of $g$.
Assume $X\subset M$ is a simply connected convex subset of $M$. For any $u\in C^\infty(X)$ such that $Pu=0$ in $X$, we can extend $u$ smoothly to $M$ such that $Pu=f'$ with some $f'\in C^\infty(M)$, $f'=0$ in $X$. Let $E=\{u\in H^1(X),Pu=0\}$, $F=\{u\vert_X,u\in H^1(M),Pu=\frac{f_1}{h},\frac{f_1}{h}\in L^2(M),f_1\equiv 0\text{ on }X,\int_Mf_1\mathrm{d}V_g=0\}$. Note that $F$ is a linear subspace of $E$. The discussion in previous section shows that $\overline{F}=E$ with the strong $L^2(X,\mathrm{d}\nu)$ topology.\\

We consider local recovery of $g$ in $X$ using solutions to the equation $Pu=0$ in $X$, which in local coordinates can be written as
\[
\partial_i(|g|^{1/2}g^{ij}h\partial_ju)=0.
\]
Note that we can use all solutions of $Pu=0$ on $X$ by discussions in previous paragraph.
Denote the matrix $\gamma$ as $\gamma^{ij}=|g|^{1/2}g^{ij}h$, we can write the above equation as
\[
\partial_i(\gamma^{ij}\partial_ju)=0,
\]
which is the precisely equation considered in \cite{bal2014inverse}. However, what we can measure is $g^{ij}\partial_j u$, not $\gamma^{ij}\partial_j u$. Nevertheless, the recovery of $g$ follows directly from the arguments in \cite{bal2014inverse} and in the following we briefly summarize the main steps. If $X$ is small enough, we have the following propositions. For proofs of these propositions, see \cite{bal2014inverse}.
\begin{proposition}\label{hyph1}
There exist two solutions $(u_1,u_2)$ of $Pu=0$ in $X$ satisfying
\[
\inf_{x\in X}\mathcal{F}_1(u_1,u_2)\geq c_0>0,\text{ where } \mathcal{F}_1(u_1,u_2):=|\nabla u_1|^2|\nabla u_2|^2-\langle \nabla u_1,\nabla u_2\rangle^2.
\]
\end{proposition}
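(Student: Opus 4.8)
The plan is to establish this by a \emph{small-domain, frozen-coefficient} argument, using that on a set $X$ of small diameter the operator $P$ is a small perturbation of a constant-coefficient operator, so that $Pu=0$ admits solutions that are $C^1$-close to prescribed affine functions. Since $\mathcal F_1(u_1,u_2)$, regarded as a function of the pair of gradients $(\nabla u_1,\nabla u_2)$, is continuous and is strictly positive exactly when those gradients are linearly independent, it suffices to choose the two limiting affine functions to have independent gradients. Throughout I use the divergence form recorded above, $Pu=0\iff\partial_i(\gamma^{ij}\partial_j u)=0$ with $\gamma^{ij}=|g|^{1/2}g^{ij}h$ smooth, symmetric and uniformly positive; I take $X=B_\delta(x_0)$, a small ball (a general small convex $X$ is handled identically after rescaling to a fixed reference domain).

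First I would rescale. Setting $y=(x-x_0)/\delta$ and $\tilde u(y)=\delta^{-1}u(x_0+\delta y)$, one has $\nabla_x u=\nabla_y\tilde u$, while $Pu=0$ on $B_\delta(x_0)$ becomes $\partial_{y_i}\bigl(\tilde\gamma^{ij}_\delta\,\partial_{y_j}\tilde u\bigr)=0$ on the unit ball $B_1$, where $\tilde\gamma^{ij}_\delta(y):=\gamma^{ij}(x_0+\delta y)$. As $\delta\to0$ the coefficient matrices $\tilde\gamma_\delta$ converge in $C^{k,\alpha}(\overline{B_1})$ to the constant matrix $\gamma(x_0)$ and stay in a fixed, uniformly elliptic set. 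Fix linearly independent $\xi_1,\xi_2\in\mathbb R^n$, put $\ell_a(x)=\xi_a\cdot(x-x_0)$, and let $u_a$ solve $Pu_a=0$ in $B_\delta(x_0)$ with $u_a=\ell_a$ on $\partial B_\delta(x_0)$ (this Dirichlet problem is uniquely solvable, the operator being uniformly elliptic with no zeroth-order term); then $\tilde u_a$ solves the rescaled equation with boundary data $\tilde\ell_a(y)=\xi_a\cdot y$. The difference $\tilde w_a=\tilde u_a-\tilde\ell_a$ has zero boundary data and, by a one-line computation, solves $\partial_{y_i}(\tilde\gamma^{ij}_\delta\partial_{y_j}\tilde w_a)=-(\xi_a)_j\,\partial_{y_i}\tilde\gamma^{ij}_\delta$, whose right-hand side is $O(\delta)$ in $C^{\alpha}(\overline{B_1})$ because $\partial_{y_i}\tilde\gamma^{ij}_\delta(y)=\delta\,(\partial_i\gamma^{ij})(x_0+\delta y)$. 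Boundary Schauder estimates for divergence-form uniformly elliptic equations, with constant uniform in $\delta$ (the rescaled coefficients all lie in a fixed bounded set on the fixed ball $B_1$), then yield $\|\tilde w_a\|_{C^{2,\alpha}(\overline{B_1})}=O(\delta)$, hence $\nabla_y\tilde u_a=\xi_a+O(\delta)$, and, undoing the rescaling, $\nabla_x u_a(x)=\xi_a+O(\delta)$ uniformly for $x\in B_\delta(x_0)$.

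Finally I would conclude: since $\xi_1,\xi_2$ are linearly independent, $\mathcal F_1(\ell_1,\ell_2)(x)>0$ for every $x\in X$, hence $\ge 2c_0>0$ on $\overline X$ by compactness; and the uniform estimate $\nabla u_a=\xi_a+O(\delta)$ together with continuity of $\mathcal F_1$ gives $\mathcal F_1(u_1,u_2)(x)\ge c_0$ for all $x\in X$ once $\delta=\operatorname{diam}X$ is small enough, which is the asserted bound. The solutions $u_1,u_2$ are smooth and solve $Pu=0$ in $X$, so by the extension construction recorded before the proposition they are realized within the admissible class, and nothing further is needed. \textbf{The main obstacle} is making the continuous-dependence step quantitative and \emph{uniform} as $\delta\to0$: one must check that the relevant elliptic estimates (Schauder, or the $H^1$-energy estimate on a fixed Lipschitz domain) have constants independent of $\delta$, which is precisely why rescaling to a fixed reference domain with coefficients in a fixed bounded uniformly elliptic set is the right move. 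As an alternative — the route taken in \cite{bal2014inverse} — one can instead take $u_1,u_2$ to be real and imaginary parts of complex geometric optics solutions $u_\rho=e^{\rho\cdot x}(1+\psi_\rho)$ with suitably chosen large complex frequency $\rho$; the small-domain construction above is simply the most economical way to secure the pointwise non-degeneracy $\mathcal F_1\ge c_0$ needed here.
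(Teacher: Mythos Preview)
Your argument is correct and is precisely the approach the paper invokes: it cites \cite{bal2014inverse} and summarizes the proof as ``explicit construction for constant $\gamma$, then continuity for variable $\gamma$ on small $X$,'' which is exactly your frozen-coefficient/rescaling scheme with affine Dirichlet data and uniform Schauder estimates. One minor remark: for divergence-form equations the natural estimate is $C^{1,\alpha}$ (writing the right-hand side as $\partial_{y_i}F^i$ with $F^i=(\xi_a)_j(\tilde\gamma^{ij}_\delta-\gamma^{ij}(x_0))=O(\delta)$ in $C^\alpha$), which already suffices for the gradient bound you need.
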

\begin{proposition}\label{hyph2}
There exist n solutions $(u_1,u_2,\cdots, u_n)$ of $Pu=0$ in $X$ satisfying
\[
\inf_{x\in X}\mathcal{F}_2(u_1,u_2,\cdots,u_n)\geq c_0>0,\text{ where } \mathcal{F}_1(u_1,u_2,\cdots,u_n):=\det(\nabla u_1,\nabla u_2,\cdots,\nabla u_n).
\]
\end{proposition}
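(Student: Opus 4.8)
The plan is to prove Proposition~\ref{hyph2} by freezing the coefficients of $P$ at a point and rescaling: on a sufficiently small ball the equation $Pu=0$ is a small perturbation of a constant-coefficient divergence-form equation, and for the latter every linear function is an exact solution with constant gradient, so one can realize a prescribed frame of gradients up to an arbitrarily small error. Proposition~\ref{hyph1} will then follow as the case $n=2$.

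Fix $x_0\in M$, work in a coordinate chart centered at $x_0$, and set $a^{ij}:=\gamma^{ij}(x_0)=|g|^{1/2}h\,g^{ij}\big|_{x_0}$, a constant symmetric positive-definite matrix. For small $r>0$ write $y=(x-x_0)/r$ and $v(y)=u(x_0+ry)$; then $Pu=0$ on $B_r(x_0)$ is equivalent to $\partial_{y_i}\big(\gamma^{ij}_r(y)\,\partial_{y_j}v\big)=0$ on $B_1$, where $\gamma^{ij}_r(y):=\gamma^{ij}(x_0+ry)$ satisfies $\|\gamma^{ij}_r-a^{ij}\|_{C^{0,\alpha}(\overline{B_1})}=O(r)$ and remains uniformly elliptic for $r$ small. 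For $k=1,\dots,n$ let $v^r_k$ solve $\partial_{y_i}(\gamma^{ij}_r\partial_{y_j}v^r_k)=0$ in $B_1$ with boundary value $v^r_k=y^k$; this Dirichlet problem is uniquely solvable. Since $a^{ij}$ is constant, $y^k$ solves $\partial_{y_i}(a^{ij}\partial_{y_j}y^k)=0$, so $w:=v^r_k-y^k$ has zero boundary data and satisfies the divergence-form equation $\partial_{y_i}(\gamma^{ij}_r\partial_{y_j}w)=-\partial_{y_i}(\gamma^{ik}_r-a^{ik})$, whose right-hand side is the divergence of a vector field of size $O(r)$ in $C^{0,\alpha}(\overline{B_1})$. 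The energy estimate gives $\|w\|_{H^1(B_1)}=O(r)$, and elliptic regularity for divergence-form equations (De Giorgi--Nash--Moser, followed by boundary Schauder estimates on the ball) upgrades this to $\|v^r_k-y^k\|_{C^{1}(\overline{B_1})}=O(r)$. In particular $\nabla v^r_k\to e_k$ uniformly on $\overline{B_1}$, so $\det(\nabla v^r_1,\dots,\nabla v^r_n)\to 1$ uniformly as $r\to 0$. Fixing $r$ small enough that this determinant exceeds $\tfrac12$ on $B_1$, undoing the rescaling, and taking $X:=B_r(x_0)$ (shrunk further if needed so that it is geodesically convex and simply connected), the pullbacks $u_1,\dots,u_n$ of $v^r_1,\dots,v^r_n$ solve $Pu=0$ in $X$, and since the rescaling multiplies $\det(\nabla u_1,\dots,\nabla u_n)$ only by the fixed positive factor $r^{-n}$, we obtain $\inf_X\det(\nabla u_1,\dots,\nabla u_n)\ge c_0>0$. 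Proposition~\ref{hyph1} is the same construction with $n$ replaced by $2$, since $\mathcal{F}_1(u_1,u_2)\to 1$ when $\nabla u_1\to e_1$ and $\nabla u_2\to e_2$. (In dimension $n\ge 3$ one could alternatively build such solutions from complex geometric optics solutions, but the rescaling argument is elementary and dimension-free.)

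Two remarks close the loop with the reconstruction. First, although $u_1,\dots,u_n$ above are a priori just solutions of $Pu=0$ in $X$, they may be taken \emph{admissible} (restrictions of the extended solutions, i.e.\ elements of $F$): by $\overline{F}=E$ in $L^2(X,\mathrm{d}\nu)$ together with interior elliptic estimates, every $u_k\in E$ is a $C^1$-limit on a slightly smaller ball of elements of $F$, and the determinant lower bound is stable under $C^1$-small perturbations (at the cost of replacing $c_0$ by $c_0/2$). Second, the only genuinely delicate point is that the $C^1$ bound on $v^r_k-y^k$ must hold \emph{uniformly in $r$}; this is where one uses that $\gamma^{ij}_r$ stays in a fixed bounded subset of $C^{k,\alpha}(\overline{B_1})$ and uniformly elliptic, so the constants in the De Giorgi--Nash--Moser and Schauder estimates do not degenerate as $r\to 0$. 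Everything else is routine elliptic theory together with the density statement of the previous section.
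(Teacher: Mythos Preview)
Your argument is correct and is precisely a detailed implementation of the approach the paper itself only sketches: the paper does not give its own proof but refers to \cite{bal2014inverse} and remarks that the propositions hold for constant $\gamma$ via explicit (linear) solutions and then for variable $\gamma$ on small $X$ ``by continuity''. Your freezing-and-rescaling argument, solving Dirichlet problems with linear boundary data and using Schauder estimates to show $\nabla v_k^r\to e_k$ uniformly, is exactly the standard way to make that continuity statement rigorous, so the two approaches coincide.
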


Choose $u_1,u_2,\cdots,u_n$ be solutions satisfying Proposition \ref{hyph2} and consider additional solutions $u_{n+1},\cdots,u_{n+m}$. Each additional solutions can be decomposed into
\[
\nabla u_{n+k}=\sum_{i=1}^n\mu^i_k\nabla u_i,\quad 1\leq k\leq m.
\]
Denote $H_i=g^{-1}\nabla u_i$, and note that $H_i$ is what we can measure. It was shown in \cite{bal2014linearized} that
\[
\begin{split}
\mu^i_k=&-\frac{\det(\nabla u_1,\cdots,\overbrace{\nabla u_{n+k}}^{i},\cdots,\nabla u_n)}{\det(\nabla u_1,\cdots,\nabla u_n)}\\
=&-\frac{\det(\gamma\nabla u_1,\cdots,\overbrace{\gamma\nabla u_{n+k}}^{i},\cdots,\gamma\nabla u_n)}{\det(\gamma\nabla u_1,\cdots,\gamma\nabla u_n)}\\
=&-\frac{\det(H_1,\cdots,\overbrace{H_{n+k}}^{i},\cdots,H_n)}{\det(H_1,\cdots,H_n)}.
\end{split}
\]
We introduce the matrices $Z_k$, for $k=1,2,\cdots,m$, defined by
\[
Z_k=[Z_{k,1}\vert\cdots\vert Z_{k,n}], \text{ where } Z_{k,i}:=\nabla\mu^i_k.
\]

Here and below, $A_n(\mathbb{R})$ and  $S_n(\mathbb{R})$ denote the vector spaces of anti-symmetric and symmetric $n\times n$ real matrices. Denote also $A^{\mathrm{sym}}:=\frac{1}{2}(A+A^T)$.
\begin{proposition}\label{hyph3}
Assume that $(u_1,u_2,\cdots, u_n)$ are solutions over $X$ satisfying the conditions in Proposition \ref{hyph2} and denote by $H$ the matrix with columns $H_1,H_2,\cdots, H_n$. Then there exists $u_{n+1},\cdots,u_{n+m}$ solutions of $Pu=0$ on $X$ such that
\begin{equation}\label{constraintforg}
\mathcal{W}:=\mathrm{span}\{(Z_kH^T\Omega)^{\mathrm{sym}},\Omega\in A_n(\mathbb{R}),1\leq k\leq m\}\subset S_n(\mathbb{R})
\end{equation}
has codimension one in $S_n(\mathbb{R})$ thoughout $X$.
\end{proposition}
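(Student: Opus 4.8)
The plan is to reduce the proposition to a pointwise claim at a fixed point $x_0\in X$, to settle that claim by linear algebra, and then to spread it over a neighbourhood by an openness argument, shrinking $X$ if necessary. Throughout I would follow \cite{bal2014inverse}; the one analytic input beyond linear algebra is the production of solutions of $Pu=0$ on $X$ with prescribed local behaviour at $x_0$, which is done there by a Runge-type approximation resting on the density statement $\overline F=E$ proved in the previous section (together with a jet-realizability statement for the coefficient-frozen operator).

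First I would fix $x_0$ and work in coordinates with $g^{ij}(x_0)=\delta^{ij}$. Writing $U$ for the matrix with columns $\nabla u_1,\dots,\nabla u_n$, one has $H=g^{-1}U$ (so $\det U\neq0$ by Proposition~\ref{hyph2}), and in particular $U^{-T}H^{T}=g^{-1}$. Differentiating $\mu^i_k=(U^{-1}\nabla u_{n+k})_i$, using $\partial_a(U^{-1})\,U=-U^{-1}\partial_aU$ and the symmetry of the Hessians $\mathrm{Hess}\,u_j$, a short computation should produce
\[
Z_kH^{T}=\Big(\mathrm{Hess}\,u_{n+k}-\sum_{j=1}^n\mu^j_k\,\mathrm{Hess}\,u_j\Big)g^{-1}=:\widetilde{\mathcal H}_k\,g^{-1},\qquad \widetilde{\mathcal H}_k\in S_n(\mathbb R).
\]
The only relation tying the $2$-jet of $u_{n+k}$ at $x_0$ to its $1$-jet is the single scalar equation $Pu_{n+k}=0$ at $x_0$; so as $u_{n+k}$ ranges over solutions with arbitrary admissible $2$-jet at $x_0$ I expect $\widetilde{\mathcal H}_k(x_0)$ to sweep out either all of $S_n(\mathbb R)$ or its trace-free part (these being the only options in the chosen coordinates, where $\gamma(x_0)$ is a scalar multiple of the identity).

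Granting this, the codimension count is elementary. For $B\in S_n(\mathbb R)$ and $\Omega\in A_n(\mathbb R)$,
\[
\langle (Z_kH^{T}\Omega)^{\mathrm{sym}},B\rangle=-\mathrm{Tr}\big(\Omega\,g^{-1}\widetilde{\mathcal H}_k B\big),
\]
which vanishes for all $\Omega\in A_n(\mathbb R)$ precisely when $g^{-1}\widetilde{\mathcal H}_kB\in S_n(\mathbb R)$. In the chosen coordinates this forces $B$ to commute with every admissible $\widetilde{\mathcal H}$, hence with all trace-free symmetric matrices, hence $B\in\mathbb R I$; invariantly $\mathcal W^{\perp}=\mathbb R\,g^{-1}$ at $x_0$, which is one-dimensional. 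Note that $g^{-1}(x)\in\mathcal W(x)^{\perp}$ for every $x$ and every choice of solutions, because $g^{-1}\widetilde{\mathcal H}\,g^{-1}$ is symmetric while $\mathrm{Tr}(\Omega\cdot\text{symmetric})=0$; so $\operatorname{codim}\mathcal W\geq1$ is automatic and only $\operatorname{codim}\mathcal W\leq1$ requires the construction.

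To finish I would pick finitely many solutions $u_{n+1},\dots,u_{n+m}$ realizing $2$-jets $\widetilde{\mathcal H}_1(x_0),\dots,\widetilde{\mathcal H}_m(x_0)$ for which the vectors $(\widetilde{\mathcal H}_kg^{-1}\Omega)^{\mathrm{sym}}$, $1\leq k\leq m$, $\Omega\in A_n(\mathbb R)$, already span the hyperplane $(\mathbb R\,g^{-1}(x_0))^{\perp}$ of $S_n(\mathbb R)$; this is possible because $S_n(\mathbb R)$ is finite-dimensional and, by the previous paragraph, the full family of such vectors has span equal to that hyperplane. Then ``this fixed finite family spans a subspace of codimension $\leq1$'' is the non-vanishing of a fixed minor in the entries of $\nabla u_i(x)$, $\mathrm{Hess}\,u_i(x)$, $\mu^i_k(x)$, $Z_k(x)$, $H(x)$, all continuous in $x$, so the property persists on a neighbourhood of $x_0$; shrinking $X$ to such a neighbourhood gives $\operatorname{codim}\mathcal W(x)=1$ throughout $X$. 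I expect the main obstacle to be the input used in the first step: constructing solutions of the variable-coefficient equation on $X$ whose $1$- and $2$-jets at $x_0$ are arbitrary subject only to $Pu=0$ at $x_0$, and verifying that this single constraint on the Hessian lowers the dimension of the achievable $\widetilde{\mathcal H}_k$ by at most one. Both are precisely the technical points carried out in \cite{bal2014inverse}.
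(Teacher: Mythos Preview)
Your proposal is correct and follows the same approach the paper itself takes: the paper's proof is simply to cite \cite{bal2014inverse} and summarize the strategy as ``explicit construction of the $u_j$ when $\gamma$ is constant, then continuity for variable $\gamma$ on small enough $X$.'' Your sketch fleshes out the linear algebra behind that citation (the identity $Z_kH^{T}=\widetilde{\mathcal H}_k\,g^{-1}$ and the commutation argument giving $\mathcal W^\perp=\mathbb R\,g^{-1}$) and the openness/continuity step, but the underlying route---verify the codimension-one claim at a point via the coefficient-frozen problem, then propagate by continuity after shrinking $X$---is exactly what the paper invokes.
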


As discussed in \cite[Section 4]{bal2014inverse}, Propositions \ref{hyph1}, \ref{hyph2} and \ref{hyph3} hold when $\gamma$ is constant, for which explicit construction of $\{u_j\}_{j=1}^{n+m}$ is possible. By continuity, the above propositions also hold true for variable $\gamma$ if $X$ is small enough.

It was proved in \cite{bal2014inverse} such that from the constraint \eqref{constraintforg}, one can recover $\tilde{g}=(\det g)^{1/n}g^{-1}$ pointwisely in $X$. Note that $\det\tilde{g}=1$. Actually $\tilde{g}$ is in the orthogonal complement of $\mathcal{W}$ in \eqref{constraintforg}.
Then denote the factor $\beta=(\det g)^{-1/n}$, and use two solutions $u_1, u_2$ satisfying the condition in Proposition \ref{hyph1}.
We write
\[
\nabla u_j=\mathrm{d}u_j=\beta^{-1}\tilde{g}^{-1}H_j.
\]
It turns out that $\beta$ satisfies the gradient equation (cf. \cite[(12)]{bal2014inverse})
\begin{equation}\label{eqforbeta}
\begin{split}
\nabla\log\beta=&\frac{1}{D|H_1|^2}\left(|H_1|^2\mathrm{d}(\tilde{g}^{-1}H_1)-(H_1\cdot H_2)\mathrm{d}(\tilde{g}^{-1}H_2)\right)(\tilde{g}H_1,\tilde{g}H_2)\tilde{g}^{-1}H_1\\
&-\frac{1}{|H_1|^2}\mathrm{d}(\tilde{g}^{-1}H_1)(\tilde{g}H_1,\cdot),\quad \text{in }X,
\end{split}
\end{equation}
where $D:=|H_1|^2|H_2|^2-(H_1\cdot H_2)^2$ is bounded away from $0$ in $X$.
Since $\tilde{g}$ is already recovered, this allows us to reconstruct $\beta$ in $X$ under the knowledge of $\beta(x_0)$ at a fixed point $x_0\in X$. For global reconstruction, one can cover the whole manifold $M$ with a finite number of small open domains $X_1,X_1,\cdots, X_N$ and patch together the local reconstructions using a partition of unity. Since the value of $\beta$ at one point $x_0$ can be arbitrarily chosen, we can recover $\log\beta$ up to an additive constant by solving the first order equation \eqref{eqforbeta}. This in turn proves that one can recover $\beta$, and thus $g$, up to a multiplicative constant. This proves the main result.

%
\bibliographystyle{abbrv}
\bibliography{biblio.bib}

\begin{thebibliography}{10}

\bibitem{bal2014inverse}
G.~Bal, C.~Guo, and F.~Monard.
\newblock Inverse anisotropic conductivity from internal current densities.
\newblock {\em Inverse Problems}, 30(2):025001, 2014.

\bibitem{bal2014linearized}
G.~Bal, C.~Guo, and F.~Monard.
\newblock Linearized internal functionals for anisotropic conductivities.
\newblock {\em Inverse Problems \& Imaging}, 8(1), 2014.

\bibitem{bal2013reconstruction}
G.~Bal and G.~Uhlmann.
\newblock Reconstruction of coefficients in scalar second-order elliptic
  equations from knowledge of their solutions.
\newblock {\em Communications on Pure and Applied Mathematics},
  66(10):1629--1652, 2013.

\bibitem{bao2025wellposednessefficientalgorithmsinverse}
C.~Bao, Z.~Li, and Y.~Yang.
\newblock Well-posedness and efficient algorithms for inverse optimal transport
  with bregman regularization.
\newblock {\em arXiv preprint arXiv:25210.03803}, 20250.

\bibitem{brenier1987decomposition}
Y.~Brenier.
\newblock D{\'e}composition polaire et r{\'e}arrangement monotone des champs de
  vecteurs.
\newblock {\em CR Acad. Sci. Paris S{\'e}r. I Math.}, 305:805--808, 1987.

\bibitem{brenier1991polar}
Y.~Brenier.
\newblock Polar factorization and monotone rearrangement of vector-valued
  functions.
\newblock {\em Communications on pure and applied mathematics}, 44(4):375--417,
  1991.

\bibitem{cordero2001riemannian}
D.~Cordero-Erausquin, R.~J. McCann, and M.~Schmuckenschl{\"a}ger.
\newblock A {R}iemannian interpolation inequality {\`a} la {B}orell, {B}rascamp
  and {L}ieb.
\newblock {\em Inventiones Mathematicae}, 146(2):219--257, 2001.

\bibitem{gonzalez2024nonlinear}
A.~Gonz{\'a}lez-Sanz, M.~Groppe, and A.~Munk.
\newblock Nonlinear inverse optimal transport: Identifiability of the transport
  cost from its marginals and optimal values.
\newblock {\em SIAM Journal on Mathematical Analysis}, 56(6):7808--7829, 2024.

\bibitem{kantorovich1942translocation}
L.~V. Kantorovich.
\newblock On the translocation of masses.
\newblock In {\em Dokl. Akad. Nauk. USSR (NS)}, volume~37, pages 199--201,
  1942.

\bibitem{LiLin2025}
T.~Liimatainen and Y.-H. Lin.
\newblock An inverse problem for the {M}onge-{A}mp\`ere equation.
\newblock {\em arXiv preprint arXiv:25210.11572}, 20250.

\bibitem{ma2020learning}
S.~Ma, H.~Sun, X.~Ye, H.~Zha, and H.~Zhou.
\newblock Learning cost functions for optimal transport.
\newblock {\em arXiv preprint arXiv:2002.09650}, 2020.

\bibitem{mccann2001polar}
R.~J. McCann.
\newblock Polar factorization of maps on {R}iemannian manifolds.
\newblock {\em Geometric \& Functional Analysis}, 11(3):589--608, 2001.

\bibitem{monard2013inverse}
F.~Monard and G.~Bal.
\newblock Inverse anisotropic conductivity from power densities in dimension
  n$\geq$ 3.
\newblock {\em Communications in Partial Differential Equations},
  38(7):1183--1207, 2013.

\bibitem{monge1781memoire}
G.~Monge.
\newblock M{\'e}moire sur la th{\'e}orie des d{\'e}blais et des remblais.
\newblock {\em Mem. Math. Phys. Acad. Royale Sci.}, pages 666--704, 1781.

\bibitem{paty2020regularized}
F.-P. Paty and M.~Cuturi.
\newblock Regularized optimal transport is ground cost adversarial.
\newblock In {\em International Conference on Machine Learning}, pages
  7532--7542. PMLR, 2020.

\bibitem{renardy2004introduction}
M.~Renardy and R.~C. Rogers.
\newblock {\em An introduction to partial differential equations}.
\newblock Springer, 2004.

\bibitem{santambrogio2015optimal}
F.~Santambrogio.
\newblock {\em Optimal Transport for Applied Mathematicians: Calculus of
  Variations, PDEs, and Modeling}, volume~87.
\newblock Birkh{\"a}user, 2015.

\bibitem{schiebinger2019optimal}
G.~Schiebinger, J.~Shu, M.~Tabaka, B.~Cleary, V.~Subramanian, A.~Solomon,
  J.~Gould, S.~Liu, S.~Lin, P.~Berube, et~al.
\newblock Optimal-transport analysis of single-cell gene expression identifies
  developmental trajectories in reprogramming.
\newblock {\em Cell}, 176(4):928--943, 2019.

\bibitem{stuart2020inverse}
A.~M. Stuart and M.-T. Wolfram.
\newblock Inverse optimal transport.
\newblock {\em SIAM Journal on Applied Mathematics}, 80(1):599--619, 2020.

\bibitem{villani2008optimal}
C.~Villani et~al.
\newblock {\em Optimal transport: old and new}, volume 338.
\newblock Springer, 2008.

\bibitem{wang2023self}
Y.~Wang, H.~Xu, and D.~Luo.
\newblock Self-supervised video summarization guided by semantic inverse
  optimal transport.
\newblock In {\em Proceedings of the 31st ACM International Conference on
  Multimedia}, pages 6611--6622, 2023.

\end{thebibliography}
\end{document}